\newcommand{\N}{\mathbb{N}}
\newcommand{\Z}{\mathbb{Z}}
\newcommand{\C}{\mathbb{C}}
\newcommand{\A}{\mathcal{A}}
\newcommand{\I}{\mathcal{I}}
\newcommand{\cO}{\mathcal{O}}
\newcommand{\G}{\mathcal{G}}
\newcommand{\K}{\mathcal{K}}
\newcommand{\U}{\mathcal{U}}
\newcommand{\M}{\mathcal{M}}
\newcommand{\tr}{\operatorname{tr}}
\newcommand{\Wg}{\operatorname{Wg}}
\newcommand{\tWg}{\overset{\sim}{\operatorname{Wg}}}
\newcommand{\paren}[1]{\left(#1\right)}
\newcommand{\set}[1]{\left\{#1\right\}}
\newcommand{\abs}[1]{\left|#1\right|}
\newtheorem{thm}{Theorem}
\newtheorem{prop}[thm]{Proposition}
\newtheorem{cor}[thm]{Corollary}
\theoremstyle{definition}
\newtheorem{defn}[thm]{Definition}
\newtheorem{ex}[thm]{Example}
\newtheorem{model}{Model}
\definecolor{blueish}{HTML}{3388BB}
\definecolor{reddish}{HTML}{FF4444}
\author{Ian~Charlesworth$^\circ$}
\address{$^\circ$Department of Mathematics, University of California, Berkeley \hfill \url{ilc@math.berkeley.edu}}
\author{Beno\^it~Collins$^\bullet$}
\address{$^\bullet$Department of Mathematics, Graduate School of Science, Kyoto University \hfill \url{collins@math.kyoto-u.ac.jp}}
\keywords{Free independence, Lambda-freeness, Random matrices}
\title{Matrix models for $\varepsilon$-free independence.}
\begin{document}

\begin{abstract}
	We investigate tensor products of random matrices, and show that independence of entries leads asymptotically to $\varepsilon$-free independence, a mixture of classical and free independence studied by M\l{}otkowski and by Speicher and Wysocza\'nski.
	The particular $\varepsilon$ arising is prescribed by the tensor product structure chosen, and conversely, we show that with suitable choices an arbitrary $\varepsilon$ may be realized in this way.
	As a result we obtain a new proof that $\mathcal{R}^\omega$-embeddability is preserved under graph products of von Neumann algebras, along with an explicit recipe for constructing matrix models.
\end{abstract}

\maketitle

\section{Introduction.}
$\varepsilon$-free probability was originally introduced by M\l{}otkowski in \cite{mlotkowski2004lambda}\footnote{In fact, M\l{}otkowski referred to this concept as ``$\Lambda$-free probability'', but we follow Speicher and Wysocza\'nski here and use the term ``$\varepsilon$-free probability'' instead.} to provide a mixture of classical and free independence; it was shown, for example, that the $q$-deformed Gaussians can be realized as central limit variables in this framework.
Relations of this type were further studied by Speicher and Wysocza\'nski in \cite{speicher2016mixtures}, where cumulants describing $\varepsilon$-freeness were introduced; this led into later work on partial commutation relations in quantum groups studied by Speicher and Weber in \cite{speicher2016quantum}.

The theory of $\varepsilon$-freeness is also connected with the graph products of groups, which has itself been imported into operator algebras and studied, for example, by Caspers-Fima in \cite{caspers2017graph} and by Atkinson in \cite{atkinson2018graph}, where certain stability properties (e.g., the Haagerup property) are shown to be preserved under graph products.
It was also shown by Caspers in \cite{caspers2015} that graph products preserve $\mathcal{R}^\omega$-embeddability.
The connection with our setting is this: algebras are $\varepsilon$-freely independent in their $G$-product when $\varepsilon$ is the adjacency matrix of $G$.

The goal of this short note is to show that $\varepsilon$-independence describes the asymptotic behaviour of tensor products of random matrices; as a consequence, we obtain a method of producing nice matrix models for $\varepsilon$-independent $\mathcal{R}^\omega$-embeddable non-commutative random variables, and another proof that graph products preserve $\mathcal{R}^\omega$-embeddability.

This document is organized as follows.
In Section~\ref{sec:prelim} we present the necessary background on $\varepsilon$-free independence.
In Section~\ref{sec:model} we describe the sort of matrix model we will be considering, and state the main theorem describing the asymptotic behaviour of such models.
In Section~\ref{sec:formula} we compute the necessary moments to obtain the result.
Finally, in Section~\ref{sec:extensions} we discuss some applications of this work to $\mathcal{R}^\omega$-embeddability, and show how the techniques may be adapted to study the case of orthogonally-invariant matrix models instead of unitarily-invariant ones.

\subsection*{Acknowledgements.}

This research project was initiated at PCMI during a Random Matrix program in 2017,  
continued at IPAM during an Operator Algebra program in 2018,
and completed during visits of the authors at each other's institutions in 2019. 
We thank all these institutions for a fruitful working environment. 

Finally, we would like to express our gratitude to Guillaume C\'ebron: after we released the first version of this
manuscript, he pointed out a preprint of Morampudi and Laumann \cite{1808.08674}, which we were not aware of. 
Although the motivations and perspective are rather different --  \cite{1808.08674} is of physical motivation and nature -- 
it turns out to be quite relevant because it studies similar random objects and describes related phenomena to those considered here. 
We hope that the approach through non-commutative probability contained in this manuscript
will trigger further interactions with the study of quantum many-body systems and theoretical physics. 

IC was supported by NSF DMS grant 1803557, and 
BC was supported by JSPS KAKENHI 17K18734, 17H04823, 15KK0162 and ANR- 14-CE25-0003. 

\section{Preliminaries.}
\label{sec:prelim}

We discuss here the notion of $\varepsilon$-freeness, originally introduced by M\l{}otkowski in \cite{mlotkowski2004lambda} and later examined by Speicher and Wysocza\'nski in \cite{speicher2016mixtures}.
This is meant to be a mixture of classical and free independence defined for a family of algebras, where the choice of relation between each pair of algebras is prescribed by a given symmetric matrix $\varepsilon$.
Note that we use here the convention of Speicher and Wysocza\'nski to use $\varepsilon$ rather than M\l{}otkowski's $\Lambda$.

Let $\I$ be a set of indices.
Fix a symmetric matrix $\varepsilon = (\varepsilon_{ij})_{i, j \in \I}$ consisting of $0$'s and $1$'s with $0$'s on the diagonal.
We will think of non-diagonal entries as prescribing free independence by $0$ and classical independence by $1$; the choice of $0$'s on the diagonal was made for later convenience.

\begin{defn}
	Let $\I$ be a set of indices.
	The set $I^\varepsilon_n$ consists of $n$-tuples of indices from $I$ for which neighbours are different modulo the implied commutativity.
	That is, $(i_1, \ldots, i_n) \in I^\varepsilon_n$ if and only if whenever $i_j = i_\ell$ with $1\leq j < \ell \leq n$ there is $k$ with $j < k < \ell$, $i_j \neq i_k$, and $\varepsilon_{i_ji_k} = 0$.
\end{defn}

\begin{defn}
	Let $(\A, \varphi)$ be a non-commutative probability space.
	Then the unital subalgebras $\A_i \subseteq \A$ are $\varepsilon$-independent if $\A_i$ and $\A_j$ commute whenever $\varepsilon_{ij} = 1$, and whenever we take $a_1, \ldots, a_n \in \A$ with $a_j \in \A_{i_j}$, $\varphi(a_j) = 0$, and $(i_1, \ldots, i_n) \in I^\varepsilon_n$ we have $\varphi(a_1\cdots a_n) = 0$.
\end{defn}

For the purpose of introducing matrix models, we introduce the notion of $\varepsilon$-independence for a disjoint collection of 
sets of elements (aka non-commutative random variables) in
$(\A, \varphi)$ and an asymptotic counterpart. 
\begin{defn}\label{def-3}
	Let $(\A, \varphi)$ be a non-commutative probability space and $a_{ij}, (i,j)\in \I\times J$ a family of elements of $\A$.
	They are called $\varepsilon$-independent (or $*$-$\varepsilon$-independent) if the $*$-unital subalgebras
	$\A_i$ generated by $a_{ij}, j\in J$ are $\varepsilon$-independent.
	
	If one takes a sequence of non-commutative probability spaces $(\A_n, \varphi_n)$ and $a_{ij}^{(n)}\in \A_n$ for $(i,j)\in \I\times J$,
	then one defines asymptotic $\varepsilon$-independence as the convergence of $a_{ij}^{(n)}$ in $*$-moments
	to $*$-$\varepsilon$-independent variables.
	
	For an $\varepsilon$-independent family $a_{ij}, (i,j)\in \I\times J$ a family of elements of $\A$, we say that it has a matrix
	model if $\A_n$ can be taken as a matrix algebra. 
\end{defn}

\section{Description of the model.}
\label{sec:model}

\subsection{Sufficient conditions to exhibit a model.}
\label{ss:model}
We are interested in producing a way to model $\varepsilon$-freeness using approximations in finite dimensional matrix algebras.
In particular, we will be interested in models with the following data: 
\begin{enumerate}
	\item for each $N \in \N$, an algebra $\M^{(N)}$ which is the tensor product of a fixed number of copies of $M_N(\C)$, indexed by a non-empty finite set $S$;
	\item for each $\iota \in \I$ a non-empty subset $\K_\iota \subseteq S$, with algebras
		\[\M_\iota^{(N)} := \bigotimes_{s \in \K_\iota}M_N(\C) \otimes \bigotimes_{s \in S \setminus \K_\iota} \C1_N\]
		viewed as included in $\M^{(N)}$ in a way consistent with their indices;
	\item a family of independent unitaries $(U_\iota)_{\iota\in\I}$, with $U_\iota$ Haar-distributed in $\U\paren{\M_\iota^{(N)}}$.
\end{enumerate}
Our goal for the next few sections of this paper will be to establish the following characterisation of the asymptotic behaviour of such models.
\begin{thm}
	\label{thm:main}
	With the notation used above, let $(a_{\iota,j}^{(N)})_N \in U_\iota\M_\iota^{(N)}U_\iota^*$ be a collection of sequences of random matrices, indexed by $(\iota, j) \in \I\times J$, such that for each $\iota$, the collection $\paren{(a_{\iota,j}^{(N)})_N}_j$ has a joint limiting $*$-distribution in the large $N$ limit as per definition $\ref{def-3}$.
	Then the entire family $\paren{(a_{\iota,j}^{(N)})_N}_{\iota,j\in\I\times J}$ has a joint limiting $*$-distribution in the large $N$ limit, and the original families are $\varepsilon$-free where
	\[\varepsilon_{i,j} = \begin{cases}
			1 & \text{ if } \K_i\cap\K_j = \emptyset \\
			0 & \text{ otherwise}
	\end{cases}.\]
\end{thm}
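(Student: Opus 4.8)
The plan is to reduce every assertion to the computation of mixed $*$-moments and to evaluate these with the Weingarten calculus, treating the two regimes $\K_i\cap\K_j=\emptyset$ and $\K_i\cap\K_j\neq\emptyset$ separately. Any $*$-monomial in the variables $a_{\iota_k,j_k}^{(N)}$ and their adjoints has the form $\prod_{k=1}^n U_{\iota_k}m_k^{(N)}U_{\iota_k}^*$ with each $m_k^{(N)}\in\M_{\iota_k}^{(N)}$, so it suffices to compute the asymptotics of $\E\,\varphi\big(\prod_{k=1}^n U_{\iota_k}m_k^{(N)}U_{\iota_k}^*\big)$, where $\varphi$ is the normalised trace on $\M^{(N)}$, and to check that in the limit these agree with the moments dictated by $\varepsilon$-freeness of the limiting distributions. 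Since the Weingarten formula will express each such quantity explicitly in terms of the individual distributions of the families $\paren{(a_{\iota,j}^{(N)})_N}_j$, which converge by hypothesis, the existence of the joint limiting $*$-distribution will drop out of the very same computation.

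The classical-independence regime is exact rather than merely asymptotic. If $\K_i\cap\K_j=\emptyset$, then $\M_i^{(N)}$ and $\M_j^{(N)}$ act on disjoint tensor legs and therefore commute inside $\M^{(N)}$; since $U_i\in\M_i^{(N)}$ we have $U_i\M_i^{(N)}U_i^*=\M_i^{(N)}$, so the conjugated algebras commute as well, supplying the commutation required for $\varepsilon_{ij}=1$. Moreover $\varphi$ is the tensor product of the normalised traces on the legs, hence factorises across disjoint supports: $\varphi(xy)=\varphi(x)\varphi(y)$ whenever $x$ is supported on $\K_i$ and $y$ on $\K_j$. This is exactly tensor (classical) independence, and it holds at each fixed $N$, so no limit is needed in this case.

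For the remaining structure I would integrate out the Haar unitaries one index at a time. Grouping the factors of the monomial by their index $\iota$ and using the independence of the $U_\iota$, each $U_\iota$ — Haar on $\U\paren{\M_\iota^{(N)}}=\U\paren{N^{\abs{\K_\iota}}}$ — can be integrated by the Weingarten formula, producing a sum over permutations $\sigma_\iota$ pairing the occurrences of $U_\iota$ with those of $U_\iota^*$ and weighted by $\Wg\paren{N^{\abs{\K_\iota}},\sigma_\iota}$. The essential book-keeping step is then to evaluate the resulting products of matrix entries leg by leg: because $\varphi$ factorises over $s\in S$ and each $m_k^{(N)}$ acts only on the legs in $\K_{\iota_k}$, the contribution of a tensor leg $s$ is a product of traces whose cycle structure is governed by the permutations $\sigma_\iota$ for those $\iota$ with $s\in\K_\iota$. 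Each leg contributes a power of $N$ equal to its number of cycles, and these combine with the powers of $N$ carried by the Weingarten weights.

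The crux — and the step I expect to be the main obstacle — is the leading-order analysis of this sum in $N$. One must show that the surviving configurations are precisely those in which, on every shared leg, the permutations interlock in a non-crossing fashion so that overlapping families behave freely, while disjoint legs simply factorise and reproduce the classical part; all other configurations must be shown to be of strictly lower order through a genus-type count balancing the number of cycles produced on each leg against the negative powers of $N$ carried by the $\Wg\paren{N^{\abs{\K_\iota}},\sigma_\iota}$. This requires carefully disentangling the interaction of distinct indices on their common legs, and then verifying that the centring conditions $\varphi(m_k^{(N)})=0$ together with the reduced-word hypothesis $(\iota_1,\dots,\iota_n)\in I^\varepsilon_n$ force the leading term to vanish, matching the defining property of $\varepsilon$-independence. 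Once the leading-order term is identified as the $\varepsilon$-free moment of the limiting variables, both the convergence and the $\varepsilon$-freeness assertions follow at once.
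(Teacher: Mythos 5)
Your overall architecture matches the paper's: expand each mixed $*$-moment as $E(\tr(U_{\iota_1}m_1U_{\iota_1}^*\cdots U_{\iota_k}m_kU_{\iota_k}^*))$, integrate each independent Haar unitary by the Weingarten formula, do the bookkeeping leg by leg (each string $s$ contributing a power of $N$ counting cycles of the induced permutations), and then run a genus-type bound $|Z_s|\le|\sigma_s|+|\sigma_s^{-1}\tau_s|+|\tau_s^{-1}Z_s|$ to restrict to configurations that are non-crossing on every string. Up to that point your plan reproduces Theorem~\ref{th-integration-epsilon} and the first part of Theorem~\ref{th-asymptotic-eps}, and your observation that the commutation required when $\K_i\cap\K_j=\emptyset$ holds exactly at finite $N$ is correct and is implicitly used in the paper as well.

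However, the step you yourself flag as ``the crux'' is precisely the one genuinely new idea in the proof, and you do not supply it. After discarding the subleading (crossing) terms, it is \emph{not} the case that the surviving terms visibly ``reproduce freeness on shared legs and classical independence on disjoint legs''; what one actually has is a sum over $\sigma\in\tilde S_k$ with every $\sigma_s$ non-crossing, weighted by $\tr_\sigma(A_1,\dots,A_k)$, and one must show that each such term vanishes under the centering hypothesis. The mechanism is the paper's Proposition~\ref{prop-fixed-point}: if $(\iota_1,\dots,\iota_k)\in I_k^\varepsilon$ and every $\sigma_s$ is non-crossing, then $\sigma$ has a fixed point, so $\tr_\sigma(A_1,\dots,A_k)$ contains a lone factor $\tr(A_i)=o(1)$. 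The proof is a minimal-block argument: take a block $B_0$ of $\pi_\sigma$ with $\max B_0-\min B_0$ minimal; if it contains $i<j$, the reduced-word condition produces $\ell$ with $i<\ell<j$, $\iota_\ell\neq\iota_i$ and $\varepsilon_{\iota_i\iota_\ell}=0$, hence a common string $s\in\K_{\iota_i}\cap\K_{\iota_\ell}$; non-crossingness of $\sigma_s$ then forces the block of $\ell$ to nest strictly inside $\{i+1,\dots,j-1\}$, contradicting minimality. Without this argument (or an equivalent one), the assertion that ``the centring conditions together with the reduced-word hypothesis force the leading term to vanish'' is exactly the statement to be proved, not a consequence of the Weingarten bookkeeping; so as written the proposal has a genuine gap at its central step. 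A secondary, smaller omission: to pass from the vanishing of centered reduced moments to the stated convergence, you should also note the standard recentering $m_k=(m_k-\varphi(m_k))+\varphi(m_k)$ and the fact that commutation lets you reduce arbitrary words to elements of $I_k^\varepsilon$.
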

This theorem is proven below as Theorem~\ref{th-asymptotic-eps}(\ref{th-asymptotic-eps-vanishing}).
We will also indicate some ways in which such choices may be made to realise a desired matrix $\varepsilon$.

\begin{model}
	Suppose $\varepsilon$ is prescribed.
	Let $S = \set{\set{i,j} : i, j \in \I \text{ with } \varepsilon_{i,j} = 0}$, and $\K_\iota = \set{s \in S : \iota \in s}$; note that our choice that $\varepsilon_{i,i}=0$ ensures that each $\K_\iota$ is non-empty since $\set{\iota} \in \K_\iota$.
	It is clear that $\K_i\cap\K_j = \emptyset$ precisely when $\varepsilon_{i,j}=1$, as we desire.
\end{model}

\begin{model}
	\label{model:anticliques}
	Suppose $\varepsilon$ is prescribed.
	Let $\G$ be the graph on $\I$ with adjacency matrix $\varepsilon$; that is, indices $i, j \in \I$ are connected in $\G$ if and only if $\varepsilon_{i,j} = 1$; moreover, let $\G^\circ$ be the (self-loop free) complement graph of $\G$, i.e., the graph with vertex set $\I$ where two (distinct) vertices are connected if and only if they are not connected in $\G$.
	Take $S$ to be a clique edge cover of $\G^\circ$: any set of cliques in $\G^\circ$ such that each edge in $\G^\circ$ is contained in at least one element of $S$, and set $\K_\iota = \set{K \in S : \iota\in K}$.
	If we choose $S$ in such a way that each vertex is included in at least one clique, $K_\iota$ will be non-empty.
	Further, if $\varepsilon_{i,j} = 0$ then $(i, j)$ is an edge in $\G^\circ$ and hence contained in at least one clique $K \in S$; then $S \in \K_i \cap \K_j$ which is therefore non-empty.
	It follows that our model will be asymptotically $\varepsilon$-free.

	One may, for example, take $S$ to be the set of all edges and all vertices in $\G^\circ$ (that is, to consist of all cliques of size one or two in $\G^\circ$) which will recover Model A.
	Alternatively, one may take the set of all maximal cliques in $\G^\circ$ (equivalently, the set of all maximal anti-cliques in $\G^\circ$).
	Since $\M^{(N)} \cong M_{N^{\#S}}(\C)$, selecting a smaller set $S$ leads to a much lower rate of dimension growth as $N\to\infty$; unfortunately, finding a minimum clique edge cover is in general very hard \cite{kou1978covering}\footnote{Even approximating a minimum clique edge cover within a factor of $2$ cannot be done in polynomial time unless $P=NP$.}.
\end{model}

\subsection{An example.}\ 
\label{ss:ex}

Let us work out thoroughly an example.
We will take $\varepsilon$ to be the following matrix:
\[\varepsilon = \begin{pmatrix}
		0 & 1 & 0 & 0 & 0 \\
		1 & 0 & 1 & 0 & 0 \\
		0 & 1 & 0 & 1 & 1 \\
		0 & 0 & 1 & 0 & 0 \\
		0 & 0 & 1 & 0 & 0 \\
\end{pmatrix}\]
The corresponding graph and its complement are drawn below.
\[\begin{tikzpicture}[baseline, every node/.style={scale=.75}]
		\node (1) [draw, circle] at (0, 0) {1};
		\node (2) [draw, circle] at (1, 0) {2};
		\node (3) [draw, circle] at (2, 0) {3};
		\node (4) [draw, circle] at ($ (3) + (30:1) $) {4};
		\node (5) [draw, circle] at ($ (3) + (-30:1) $) {5};
		\draw (1) -- (2) -- (3) node [midway, above] {$\G$} -- (4);
		\draw (3) -- (5);
	\end{tikzpicture}
	\qquad\qquad
	\begin{tikzpicture}[baseline, every node/.style={scale=.75}]
		\node (3) [draw, circle] at (0, 0) {3};
		\node (1) [draw, circle] at (1, 0) {1};
		\node (4) [draw, circle] at ($ (1) + (30:1) $) {4};
		\node (5) [draw, circle] at ($ (1) + (-30:1) $) {5};
		\node (2) [draw, circle] at ($ (4) + (-30:1) $) {2};
		\draw (3) -- node [midway, above] {$\G^\circ$} (1) -- (4) -- (2) -- (5) -- (1);
		\draw (4) -- (5);
\end{tikzpicture}\]

The maximal anti-cliques in $\G$ are $\set{1, 4, 5}$, $\set{2, 4, 5}$, and $\set{1,3}$.
Therefore if we build Model~\ref{model:anticliques} for this graph using the set of maximal anti-cliques for $S$, we wind up with
\[\begin{array}{rcccccl}
		\M^{(N)} =&	M_N(\C)&\otimes&M_N(\C)&\otimes& M_N(\C);\\
		\M_1^{(N)} =&	M_N(\C)&\otimes&\C&\otimes& M_N(\C);\\
		\M_2^{(N)} =& \C&\otimes& M_N(\C) &\otimes& \C;\\
		\M_3^{(N)} =& \C&\otimes& \C&\otimes& M_N(\C); &\text{ and } \\
		\M_4^{(N)} = \M_5^{(N)} =& M_N(\C)&\otimes& M_N(\C)&\otimes& \C.
\end{array}\]

In particular, if we take $X_i^{(N)}$ to be independent GUE matrices in $\M_i^{(N)}$, we have that $(X_i^{(N)})_i$ converges in law to a family $(S_i)_i$ of semi-circular variables which are $\varepsilon$-free.

It is sometimes useful to visualize $S$ as labelling a set of parallel strings, and elements of $\M_\iota^{(N)}$ as being drawn on the strings corresponding to $\K_\iota$; then, for example, elements commute when they can slide past each other on the strings.
So for example, one could imagine the product $X_1^{(N)}X_3^{(N)}X_5^{(N)}X_2^{(N)}X_1^{(N)}X_4^{(N)}X_2^{(N)}X_4^{(N)}$ as follows:
\[\begin{tikzpicture}[baseline, scale=.75, every node/.style={scale=.75}]

		\draw (0,3/2) node [left] {$\set{1,4,5}$} -- (9,3/2);
		\draw (0,2/2) node [left] {$\set{2,4,5}$} -- (9,2/2);
		\draw (0,1/2) node [left] {$\set{1,3}$} -- (9,1/2);

		\def\shades{{0, 0, 15, 30, 50, 100}}
		\def\chords{{"{0}", "{1, 3}", "{2}", "{3}", "{1, 2}", "{1, 2}"}}
		\def\sequence{{0,1,3,5,2,1,4,2,4}}

		\foreach \x in {1, ..., 8} {
			\pgfmathparse{\sequence[\x]}
			\edef\ind{\pgfmathresult}
			\pgfmathparse{\chords[\ind]}
			\edef\chord{\pgfmathresult}
			\pgfmathparse{\shades[\ind]}
			\edef\myshade{\pgfmathresult}
			\foreach \y in \chord {
				\node [draw, shade, circle, ball color=black!\myshade!white] at (\x, 2-0.5*\y) {};
			}
			\node (a) at (\x, 0) {$X_{\ind}^{(N)}$};
		}
\end{tikzpicture}.\]
Note that $(1,3,5,2,1,4,2,4) \in I_8^\varepsilon$ (pictorially, no two columns of variables of the same colour may be slid next to each other along the strings), so in particular asymptotic $\varepsilon$-freeness implies
\[E[\tr(X_1^{(N)}X_3^{(N)}X_5^{(N)}X_2^{(N)}X_1^{(N)}X_4^{(N)}X_2^{(N)}X_4^{(N)})] \xrightarrow{N\to\infty} 0.\]
Here, we took Gaussian unitary ensembles, but it is enough to take matrices that are centred (in expectation)
and unitarily invariant, as we will see further down. 

\section{The formula}
\label{sec:formula}

\subsection{Weingarten integral over tensors}\ 
Let us first fix some notation.
On any matrix algebra, we call $\tr$ the normalized trace. 
For a permutation $\sigma \in S_k$, and a matrix algebra of any dimension $\mathcal{M}_N$, we define
a $k$-linear map $\tr_{\sigma}: \mathcal{M}_N^k\to \mathbb{C}$ by
$$\tr_{\sigma } (A_1\ldots , A_k)=\prod_{cycles}\tr \paren{\prod_{i\in cycle} A_i},$$ 
where
the product in each cycle is taken according to the cyclic order defined by $\sigma$.
For example, $\tr_{(1, 2)(3)}(A_1,A_2,A_3)=\tr(A_1A_2)\tr A_3$. 
Note that by traciality this formula does not depend on the product cycle decomposition.

As in Section~\ref{ss:model}, we fix an index set $\I$ and a finite set $S$, and for $N \in \N$ take $$\M^{(N)} := \bigotimes_{s \in S} M_N(\C).$$
We also fix $\emptyset \neq \K_\iota \subset S$ for each $\iota \in \I$, and set $\M_\iota^{(N)}$ and $(U^{(N)}_\iota)_{\iota\in\I}$ as above, so that $U_\iota^{(N)}$ has Haar distribution on $\U(M_\iota^{(N)})$.

Let us fix $\iota_1, \ldots, \iota_k \in \I$.
The purpose of this section is to compute, for $E$ the the expectation with respect to all $U_\iota^{(N)}$,
\begin{equation}\label{eq-to-compute}
	E(\tr (U_1A_1U_1^*\cdots U_kA_kU_k^*)),
\end{equation}
where $A_j \in M_{\iota_j}^{(N)}$ 
and $U_j = U_{\iota_j}^{(N)}$.

This integral can be computed thanks to the Weingarten formula which we recall here; for further details about Weingarten calculus, we refer the interested reader to \cite{MR2217291,MR1959915}.
The Weingarten calculus says that there exists a central function $\Wg: S_k\times\N\to\mathbb{C}$ such that
$$\int u_{i_1j_1}\ldots u_{i_kj_k}\bar u_{i_1'j_1'}\ldots \bar u_{i_k'j_k'}
=\sum_{\sigma,\tau\in S_k}\delta_{j, \sigma\cdot j'}\delta_{i,\tau\cdot i'}\Wg (\sigma\tau^{-1},N),$$
where the integral is over the Haar measure of the unitary group of $U_N$, and the action of permutations is understood as $\tau\cdot i' = (i_{\tau^{-1}(1)}', \ldots, i_{\tau^{-1}(k)}')$.\footnote{In the literature, it is more common to take the right action of $S_k$; however, using the left action makes some of our arguments later cleaner and does not affect the defining property of the Weingarten function since it is constant on conjugacy classes and so invariant under replacing $\sigma\tau^{-1}$ by $\tau\sigma^{-1}$.}
For the purpose of this paper we just need to know that this function is well defined for $N\ge k$ and
that there exists a function $\mu: S_k\to \mathbb{Z}_*$ so that
$$\Wg (\sigma, N) = \mu (\sigma )N^{-k-|\sigma|} (1+O(N^{-2})).$$
Here, we use the notation $|\sigma |$ for the minimal number of transpositions needed to realize $\sigma$ as a product
of transpositions. This quantity is known to satisfy $|\sigma | = k-\# (\sigma )$ where
$\# (\sigma )$ is the number of cycles of the permutation $\sigma$ in the cycle decomposition (counting
the fixed points too). 
As for $\mu$, although it is irrelevant to this paper, it is closely related to Speicher's non-crossing M\"obius function, and more details can also be found in the above references.

In order to be able to extract asymptotic information when we compute the quantity in \eqref{eq-to-compute}, we introduce
some notation:
\begin{itemize}
	\item $Z=(1\ldots k)$ is the full cycle of the symmetric group $S_k$.
	\item $\tilde S_k$ is the subgroup of permutations $\sigma$ which satisfy $\iota_j = \iota_{\sigma(j)}$ for all $j$, i.e. that 
		stabilize the word $(\iota_1, \ldots, \iota_k)$.
	\item For each $s \in S$, we define $J_s := \set{j | s \in \K_{\iota_j}}$; that is, $J_s$ is those indices where the corresponding label's set contains $s$; we further denote $k_s := \#J_s$. 
		In words, for the string $s$, $J_s$ is the subset of $[[1,k]]$ of indices that act on the leg $s$.
	\item Any $\sigma\in \tilde S_k$ preserves each $J_s$ and so induces a permutation $\sigma_s \in S_{J_s} \cong S_{k_s}$. We will abuse this notation slightly and write $Z_s$ for the full cycle sending each element of $J_s$ to the next one according to the 
		cyclic order.
	\item When we need $\#(\cdot)$ and it is necessary to stress which set a permutation is acting on, we may write something such as $\#_{k_s}(\cdot)$ to mean that the permutation is viewed as an element of $S_{k_s}$ and not as the induced permutation in $S_k$ which is constant off of $J_s$.
\end{itemize}

\begin{ex}
	Let us briefly note how this notation behaves the context of the example in Subsection~\ref{ss:ex}.
	Recall that in this case, we have $k = 8$ and $(\iota_1, \ldots, \iota_8) = (1,3,5,2,1,4,2,4)$.
	Then $\tilde S_k$ is generated by the transpositions $(1\ 5)$, $(4\ 7)$, and $(6\ 8)$.
	Letting $s_1 = \set{1,4,5}, s_2 = \set{2,4,5}$, and $s_3 = \set{1,3}$ (so that $S = \set{s_1, s_2, s_3}$), we have
	\[J_{s_1} = \set{1,3,5,6,8}, \qquad J_{s_2} = \set{3,4,6,7,8}, \qquad J_{s_3} = \set{1,2,5},\]
	which is readily seen from the diagram above.
	We also have
	\[Z_{s_1} = \paren{1\ 3\ 5\ 6\ 8}, \qquad Z_{s_2} = \paren{3\ 4\ 6\ 7\ 8}, \qquad Z_{s_3} = \paren{1\ 2\ 5},\]
	and if $\sigma = (6\ 8)(4\ 7)$ we have
	\[\sigma_{s_1} = (6\ 8), \qquad \sigma_{s_2} = (6\ 8)(4\ 7), \qquad \sigma_{s_3} = id.\]
\end{ex}

Finally, we will introduce a modification of the Weingarten function, specific to our notation.
First, for $\iota \in \I$ we denote by $B_\iota := \set{j : \iota_j = \iota}$ and by $\Pi$ the partition $\set{B_\iota}_{\iota \in \I}$ of $\set{1, \ldots, k}$ (ignoring any empty blocks).
Note that $\tilde S_k$ is the stabilizer of $\Pi$.
We define the function $\tWg$ as follows:
\begin{align*}
	\tWg : \tilde S_k \times \N &\to \C \\
	(\sigma, N) &\mapsto \prod_{\iota \in \I} \Wg(\sigma|_{B_\iota}, N^{\#\K_\iota}).
\end{align*}

We are now able to state our formula:
\begin{thm}\label{th-integration-epsilon}
	The following equation always holds true:
	$$E(\tr (U_1A_1U_1^*\ldots U_kA_kU_k^*) )=
\sum_{\sigma,\tau\in \tilde S_k}\tWg(\sigma^{-1}\tau, N) \tr_{\sigma}(A_1, \ldots , A_k)\prod_{s \in S} N^{\#_{k}(Z^{-1}\tau_s)+\#_{k_s}(\sigma_s )-1}.$$
\end{thm}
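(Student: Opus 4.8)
The plan is to expand everything in coordinates and apply the Weingarten formula to each tensor leg independently, then recognize the resulting index-contractions as cycle counts. First I would write each $U_\iota^{(N)}$ as a Haar unitary on $\U(\M_\iota^{(N)}) \cong \U(N^{\#\K_\iota})$; since $\M_\iota^{(N)}$ acts nontrivially only on the legs in $\K_\iota$, the matrix entries of $U_\iota$ carry multi-indices ranging over $\prod_{s\in\K_\iota}[N]$. The key structural observation is that the unitaries $(U_\iota)_\iota$ are \emph{independent}, so the expectation $E$ factors as an iterated integral, one Haar integral per distinct value of $\iota$ appearing in $(\iota_1,\ldots,\iota_k)$; each such integral is over the block $B_\iota = \set{j : \iota_j = \iota}$. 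This is precisely why the modified function $\tWg$ is defined as a product of ordinary Weingarten functions over the blocks $B_\iota$ with the dimension $N^{\#\K_\iota}$.

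\medskip
Next I would write $\tr(U_1A_1U_1^*\cdots U_kA_kU_k^*)$ as a single sum over all the coordinate indices, one index per leg $s\in S$ at each of the $2k$ positions (the trace closes up the cyclic product via the full cycle $Z$). Applying the Weingarten formula to each block $B_\iota$ produces, for each $\iota$, a pair of permutations of $B_\iota$; assembling these across all $\iota$ gives a single pair $\sigma,\tau\in\tilde S_k$, since $\tilde S_k = \prod_\iota S_{B_\iota}$ is exactly the stabilizer of $\Pi$. The Weingarten weight is then $\prod_\iota \Wg(\sigma|_{B_\iota}\,(\tau|_{B_\iota})^{-1}, N^{\#\K_\iota}) = \tWg(\sigma^{-1}\tau, N)$, using centrality to rearrange $\sigma\tau^{-1}$ into $\sigma^{-1}\tau$ as in the footnoted convention. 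The remaining factor is a product of Kronecker deltas coming from the $\delta_{j,\sigma\cdot j'}$ and $\delta_{i,\tau\cdot i'}$ in the formula, together with the entries $(A_j)$ and the trace identifications.

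\medskip
The heart of the argument is the bookkeeping of the free indices. I would separate the surviving index sums into two independent contributions. The $A$-side contraction: the $\sigma$-deltas on the row indices of the $U$'s, combined with how $A_j$ is evaluated, collapse into $\tr_\sigma(A_1,\ldots,A_k)$; here one uses that $A_j\in\M_{\iota_j}^{(N)}$ acts as the identity off $\K_{\iota_j}$, so on legs $s\notin\K_{\iota_j}$ the corresponding index is simply passed through. The dimension-counting side: on each leg $s$, the indices that are genuinely free contribute a factor of $N$ per orbit. Carefully, for a given leg $s$, the indices appearing are exactly those at positions $J_s$, and the relevant permutations acting on them are the induced $\sigma_s$ (from the $\tau$-deltas restricted to leg $s$) interacting with the cyclic closure $Z_s$ coming from the trace. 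Counting loops of free indices on leg $s$ yields $N$ raised to the number of cycles, which is $\#_k(Z^{-1}\tau_s) + \#_{k_s}(\sigma_s) - 1$; the $-1$ accounts for the single overall trace normalization, and the split into a $Z^{-1}\tau_s$ term and a $\sigma_s$ term reflects that row-indices (governed by $\tau$ against the trace cycle $Z$) and column-indices (governed by $\sigma$) are counted separately on each leg.

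\medskip
The main obstacle will be this last leg-by-leg index count, specifically disentangling which deltas constrain the ``outer'' indices (closed by the trace cycle $Z$, producing the $\#_k(Z^{-1}\tau_s)$ term) from those constraining the ``inner'' indices (producing the $\#_{k_s}(\sigma_s)$ term), and verifying that the exponents combine correctly with a single $-1$ across all of $S$ rather than one per leg. The cleanest route is to track a single index variable per (position, leg) pair through the alternating pattern $U_j A_j U_j^*$, observe that on legs outside $\K_{\iota_j}$ the matrix $U_j$ acts trivially so those indices are untouched by the $j$-th factor, and then identify the closed loops of each index type with cycles of the appropriate permutation. Because the example in Subsection~\ref{ss:ex} already exhibits the sets $J_s$, the cycles $Z_s$, and the induced $\sigma_s$ explicitly, I would use it as a running sanity check to confirm the exponent formula on each leg before assembling the general identity.
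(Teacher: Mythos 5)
Your plan follows essentially the same route as the paper's proof: expand in coordinates, use independence of the $U_\iota$ to factor the expectation over the blocks $B_\iota$, apply the Weingarten formula with dimension $N^{\#\K_\iota}$ on each block, reassemble the permutations into $\sigma,\tau\in\tilde S_k$ with weight $\tWg$, and then count index loops leg by leg, with the $\sigma$-contractions producing $\tr_\sigma(A_1,\ldots,A_k)$ together with $\prod_s N^{\#_{k_s}(\sigma_s)}$ and the $\tau$-contractions against the trace cycle producing $\prod_s N^{\#_k(Z^{-1}\tau_s)}$. One correction to the point you flag as the main obstacle: the normalization is one factor of $N^{-1}$ \emph{per leg} (the normalized trace on $\M^{(N)}\cong M_{N^{\#S}}(\C)$ contributes $N^{-\#S}$), not a single $-1$ across all of $S$ --- this is consistent with the $-1$ appearing inside the product over $s\in S$ in the statement, so your expected resolution of that bookkeeping is backwards, though the computation itself will force the correct answer.
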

We remark that if $k_s > 0$, then $\#(\tau_s^{-1}Z) = \#(\tau_s^{-1}Z_s)$ where the left hand side is viewed in $S_k$ and the right in $S_{k_s}$.

\begin{proof}
	\newcommand{\iijj}{\vec{i},\vec{i'},\vec{j},\vec{j'}}
	\newcommand{\ii}{\vec{i},\vec{i'}}
	\newcommand{\jj}{\vec{j},\vec{j'}}
	Let us use the following vector notation: 
	$\vec{i}=(i_1,\ldots , i_k)$, where $i_1,i_2,\ldots $ are $\#S$-tuples 
	described as
	$i_1= (i_1[1], \ldots , i_1[\#S])$, and similarly for $\vec{i'},\vec{j},\vec{j'}$.
	With this notation, we can expand our product integral:
	$$E(\tr (U_1A_1U_1^*\ldots U_kA_kU_k^*) )= N^{-\#S}\sum_{\vec{i},\vec{i'},\vec{j},\vec{j'}} E (u_{i_1j_1}a_{j_1j_1'} \bar u_{i_1'j_1'}\ldots 
	u_{i_kj_k}a_{j_kj_k'}\bar u_{i_kj_k})\prod_{l=1}^k\delta_{i_l',i_{l+1}},$$
	where the product over $l$ is understood modulo $k$.
	Next, we use the fact that the $a$'s are deterministic while $u$'s from different families are independent:
	\[
		E(\tr (U_1A_1U_1^*\ldots U_kA_kU_k^*) )
		= N^{-\#S}\sum_{\iijj} a_{j_1j_1'}\cdots a_{j_kj_k'} \paren{\prod_{\iota\in\I} E\paren{\prod_{\alpha \in B_\iota} u_{i_\alpha j_\alpha}\bar{u}_{i'_\alpha j'_\alpha}}} \prod_{l = 1}^k \delta_{i_l', i_{l+1}}.
	\]
	We wish to apply the Weingarten formula to each expectation above.
	Since the unitaries in the $\iota$-th term are Haar distributed on $\#\K_\iota$ strings, our resulting dimension is $N^{\#\K_\iota}$; meanwhile the sum will be over permutations of $B_\iota$, and importantly the delta functions involved $\delta_{\cdot, \sigma\cdot(\cdot\cdot)}$ and $\delta_{\cdot, \tau\cdot(\cdot\cdot)}$ will apply only on strings in $\K_\iota$, while the other strings must have equal row and column indices; that is, each term in the product will be of the form
	\[
		E\paren{\prod_{\alpha \in B_\iota} u_{i_\alpha j_\alpha}\bar{u}_{i'_\alpha j'_\alpha}}
		= \sum_{\sigma, \tau\in S_{B_\iota}} \Wg(\sigma\tau^{-1}, N^{\#\K_\iota}) \prod_{\ell\in B_\iota}\prod_{s \in \K_\iota} \delta_{\vec{j}_\ell[s], \vec{j'}_{\sigma^{-1}(\ell)}[s]}\delta_{\vec{i}_\ell[s], \vec{i'}_{\tau^{-1}(\ell)}[s]}\prod_{s \in S\setminus \K_\iota} \delta_{\vec{i}_\ell[s],\vec{j}_\ell[s]}\delta_{\vec{i'}_\ell[s],\vec{j'}_\ell[s]}.
	\]

	Since we are choosing independently permutations on each $B_\iota$, we may instead sum over all $\sigma, \tau \in \tilde S_k$ outside of the product.
	We may then group the Weingarten terms into $\tWg(\sigma\tau^{-1}, N)$.
	We are left with the following:
	\begin{align*}
		&E(\tr (U_1A_1U_1^*\ldots U_kA_kU_k^*) ) \\
		&\qquad\qquad= N^{-\#S}\sum_{\iijj} a_{j_1j_1'}\cdots a_{j_kj_k'}
		\sum_{\sigma, \tau\in \tilde S_k} \tWg(\sigma\tau^{-1}, N)\ \cdot \\
		&\qquad\qquad\qquad\qquad\paren{\prod_{\iota\in\I}\prod_{\ell\in B_\iota}\prod_{s\in\K_\iota}
			\delta_{\vec{j_\ell}[s],\vec{j'}_{\sigma^{-1}(\ell)}[s]}
			\delta_{\vec{i_\ell}[s],\vec{i'}_{\tau^{-1}(\ell)}[s]}
		\prod_{s \in S\setminus \K_\iota} \delta_{\vec{i}_\ell[s],\vec{j}_\ell[s]}\delta_{\vec{i'}_\ell[s],\vec{j'}_\ell[s]}}
		\prod_{l=1}^k\delta_{i'_\ell,i_{\ell+1}}. \\
	\end{align*}

	Let us make some observations to simplify the above expression.
	First, notice that the final product means $\vec{i'}$ is entirely determined by $\vec{i}$, and in fact, $\vec{i'} = Z^{-1}\cdot\vec{i}$.
	The penultimate product means that our choice of $\vec{j}$, $\vec{j'}$ is partially constrained: when choosing $\vec{j}_\ell$, we are free to choose any value on the strings corresponding to $\K_{\iota_\ell}$, but must set $j_\ell[s] = i_\ell[s]$ for $s \notin \K_{\iota_\ell}$.
	Then because we have a factor of $a_{j_\ell, j_\ell'}$ which vanishes as soon as $j_\ell[s]$ and $j_\ell'[s]$ differ for any $s \notin\K_{\iota_\ell}$, we have $j_\ell'[s] = i_\ell[s]$ for $s \notin \K_{\iota_\ell}$ too; that is, a valid choice for $\vec{j}$ is a valid choice for $\vec{j'}$ and vice versa.
	Let us denote by $V_{\vec{i}}$ the set of valid choices of $j$ for a given $\vec{i}$:
	\[V_{\vec{i}} = \set{\vec{j} : \forall\ell,\forall s\in S\setminus\K_{\iota_\ell}, j_\ell[s] = i_\ell[s]}.\]
	When we restrict to summing over this set, we will need to keep the condition that $\vec{i}_\ell[s]=\vec{j}_\ell[s]=\vec{j'}_\ell[s]=\vec{i'}_\ell[s]$; although the first two equalities are satisfied if we restrict $\jj \in V_{\vec{i}}$, the last is not.
	Let us also adopt the following shorthand:
	\[
		\Delta_{\vec{x}, \vec{y}} = \prod_{\iota\in\I}\prod_{\ell\in B_\iota}\prod_{s\in\K_\iota} \delta_{\vec{x_\ell}[s],\vec{y}_{\ell}[s]},
	\]
	Rearranging some terms, we arrive at the expression below:
	\begin{equation}\label{eq:whatamess}
	\begin{split}
		&E(\tr (U_1A_1U_1^*\ldots U_kA_kU_k^*) )
		\\&\qquad=
		N^{-\#S}
		\sum_{\sigma, \tau \in \tilde S_k}\tWg(\sigma\tau^{-1},N)
		\sum_{\vec{i}}\Delta_{\vec{i},Z^{-1}\tau\cdot\vec{i}}\paren{\prod_{\iota\in\I}\prod_{\ell\in B_\iota}\prod_{s\in S\setminus\K_\iota} \delta_{\vec{i}_\ell[s],\vec{i'}_\ell[s]}}
		\sum_{\jj\in V_{\vec{i}}} a_{j_1j_1'}\cdots a_{j_kj'_k}
		\Delta_{\vec{j},\sigma\cdot\vec{j'}}.
		\end{split}
	\end{equation}

	Let us first consider the inner-most sum.
	Note first that each $a_{j_\ell j_\ell'}$ depends only on $j_\ell[s]$ for $s \in \K_\iota$, so this sum does not actually depend on $\vec{i}$.
	Moreover, if we factor the sum based on the cycles of $\sigma$ (both the $a$'s and the delta functions in $\Delta_{\vec{j},\sigma\cdot\vec{j'}}$), we notice that we have exactly a non-normalized trace (over $\M_\iota^{(N)}$ for the corresponding $\iota$) of the $A$'s corresponding to $\sigma$ (since the condition we are enforcing is that on the relevant strings, the column index $j_\ell'[s] = j_{\sigma(\ell)}[s]$ the row index).
	That is, for any $\vec{i}$,
	\[
		\sum_{\jj\in V_{\vec{i}}} a_{j_1j_1'}\cdots a_{j_kj'_k}\Delta_{\vec{j},\sigma\cdot\vec{j'}}
		= \prod_{\iota\in\I} N^{\#_{B_\iota}(\sigma|_{B_\iota})\#\K_\iota}\tr_{\sigma|_{B_\iota}}(A_1,\ldots, A_k)
		= \paren{\prod_{s \in S} N^{\#_{k_s}(\sigma_s)}}\tr_{\sigma^{-1}}(A_1, \ldots, A_k).
	\]

	Next, we turn to the middle sum in \eqref{eq:whatamess}.
	Note that if we look at the conditions along a single string, we find that we need $\vec{i}_\ell[s] = \vec{i}_{\tau(\ell)+1}[s]$ when $s \in \K_{\iota_\ell}$, while $\vec{i}_\ell[s] = \vec{i}_{\ell+1}[s]$ when $s \notin \K_{\iota_\ell}$.
	That is, we need $\vec{i}[s] = Z^{-1}\tau_s\cdot\vec{i}[s]$.
	The number of ways we have of satisfying this condition is precisely $N^{\#(Z^{-1}\tau_s)}$, and so we have
	\[
	\sum_{\vec{i}}\Delta_{\vec{i},Z^{-1}\tau\cdot\vec{i}}\paren{\prod_{\iota\in\I}\prod_{\ell\in B_\iota}\prod_{s\in S\setminus\K_\iota} \delta_{\vec{i}_\ell[s],\vec{i'}_\ell[s]}}
	= \prod_{s\in S} N^{\#_k(Z^{-1}\tau_s)}.
	\]

	Putting everything together, we have
	\[
		E(\tr (U_1A_1U_1^*\ldots U_kA_kU_k^*) )
		= \sum_{\sigma, \tau \in \tilde S_k} \tWg(\sigma\tau^{-1},N) \tr_{\sigma^{-1}}(A_1,\ldots, A_k) \prod_{s\in S} N^{\#_{k_s}(\sigma_s)+ \#_k(Z^{-1}\tau_s) - 1}.
		\]
\end{proof}

Note that some readers might have found it easier to perform the above calculation with graphical calculus
as introduced in \cite{MR2651902}.
As a consequence, we can prove the following:

\begin{thm}\label{th-asymptotic-eps}
	The following two statements hold true:
	\begin{enumerate}
		\item
			\begin{equation}\label{eq-limit-eps}
				E(\tr (U_1A_1U_1^*\ldots U_kA_kU_k^*)) =
				\sum_{\sigma,\tau\in \tilde S_k}\tr_{\sigma}(A_1, \ldots , A_k)
				\mu (\sigma^{-1}\tau)
				\paren{\prod_{s=1}^S N^{|Z_s|-|\sigma_s|-|\sigma_s^{-1}\tau_s|-|\tau_s^{-1}Z_s|}}
				 (1+O(N^{-2})).
			\end{equation}
		\item\label{th-asymptotic-eps-vanishing}
			If $(\iota_1, \dots, \iota_k) \in I_k^\varepsilon$ and $\tr(A_i) = o(1)$ for all $i$, then $E(\tr(U_1A_1U_1^*\cdots U_kA_kU_k^*)) = o(1).$
	\end{enumerate}
\end{thm}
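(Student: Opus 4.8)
The plan is to derive both statements from the exact identity of Theorem~\ref{th-integration-epsilon} by substituting the large-$N$ expansion $\Wg(\rho,M)=\mu(\rho)\,M^{-\#\mathrm{pts}-|\rho|}(1+O(M^{-2}))$ into the definition of $\tWg$. For part (1) this turns $\tWg(\sigma^{-1}\tau,N)$ into $\prod_{\iota\in\I}\mu\big((\sigma^{-1}\tau)|_{B_\iota}\big)\,N^{-\#\K_\iota(\#B_\iota+|(\sigma^{-1}\tau)|_{B_\iota}|)}(1+O(N^{-2}))$, the leading coefficient being the $\mu(\sigma^{-1}\tau)$ appearing in \eqref{eq-limit-eps}. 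The remaining work is bookkeeping on the exponent of $N$. Using that $J_s$ is the disjoint union of those $B_\iota$ with $s\in\K_\iota$, one gets $\sum_{\iota}\#\K_\iota\,\#B_\iota=\sum_s k_s$; and since $|\cdot|$ is additive over block-preserving permutations, $\sum_\iota\#\K_\iota\,|(\sigma^{-1}\tau)|_{B_\iota}|=\sum_s|\sigma_s^{-1}\tau_s|$. Combining this with the factor $\prod_s N^{\#_k(Z^{-1}\tau_s)+\#_{k_s}(\sigma_s)-1}$ already present, rewriting each $\#$ via $\#(\rho)=\#\mathrm{pts}-|\rho|$, and invoking the stated remark $\#_k(Z^{-1}\tau_s)=\#_{k_s}(Z_s^{-1}\tau_s)$ together with $|Z_s|=k_s-1$, the total exponent collapses to $\sum_s(|Z_s|-|\sigma_s|-|\sigma_s^{-1}\tau_s|-|\tau_s^{-1}Z_s|)$, which is exactly \eqref{eq-limit-eps}.

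For part (2), I would first observe that each summand $|Z_s|-|\sigma_s|-|\sigma_s^{-1}\tau_s|-|\tau_s^{-1}Z_s|$ is $\le 0$, by the triangle inequality for the bi-invariant word metric applied to the chain $e,\sigma_s,\tau_s,Z_s$. Hence every term of \eqref{eq-limit-eps} is $O(1)$, the terms with strictly negative total exponent are already $o(1)$, and only terms whose every $s$-summand vanishes can survive. Vanishing of the $s$-summand forces (again by the triangle inequality, now on $|\sigma_s^{-1}\tau_s|+|\tau_s^{-1}Z_s|\ge|\sigma_s^{-1}Z_s|$) the relation $|\sigma_s|+|\sigma_s^{-1}Z_s|=|Z_s|$, i.e.\ $\sigma_s$ is non-crossing with respect to the cycle $Z_s$, for every $s\in S$.

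The heart of the matter, and the step I expect to be the \textbf{main obstacle}, is then a purely combinatorial claim: if $(\iota_1,\dots,\iota_k)\in I_k^\varepsilon$ and $\sigma\in\tilde S_k$ has $\sigma_s$ non-crossing with respect to $Z_s$ for every $s$, then $\sigma$ has a fixed point. I would prove this by contradiction. If $\sigma$ had no fixed point, every cycle would have length $\ge 2$, so I could choose $a<b$ in a common cycle of $\sigma$ with $b-a$ minimal; since $\iota_a=\iota_b$, the defining property of $I_k^\varepsilon$ yields an index $m$ with $a<m<b$, $\iota_m\neq\iota_a$, and a common string $s\in\K_{\iota_a}\cap\K_{\iota_m}$. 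On this string $a,b$ lie in one block of the non-crossing partition $\sigma_s$ while $m$ lies in another, and the non-crossing condition forces $\sigma(m)$ to lie strictly between $a$ and $b$ as well (otherwise $a,m,b,\sigma(m)$ would form a crossing quadruple), with $\sigma(m)\neq m$. This produces a pair $m,\sigma(m)$ in a common cycle with strictly smaller gap, contradicting minimality. Granting the claim, every surviving term of \eqref{eq-limit-eps} has a $\sigma$ with some fixed point $i$, so the factor $\tr_\sigma(A_1,\dots,A_k)$ carries $\tr(A_i)=o(1)$; the other cyclic factors are traces of words drawn from a single family $B_\iota$ and hence bounded by the assumed convergence, and $\mu(\sigma^{-1}\tau)$ is a fixed scalar. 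As there are finitely many pairs $(\sigma,\tau)$, the entire expression is $o(1)$, which is the assertion.
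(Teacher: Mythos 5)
Your proposal is correct and follows essentially the same route as the paper: the same substitution of the Weingarten asymptotics into $\tWg$ with the same exponent bookkeeping for (1), and for (2) the same reduction to geodesic (non-crossing) $\sigma_s$ followed by the same minimality argument for the fixed point (the paper isolates this as Proposition~\ref{prop-fixed-point}, choosing a block of minimal length rather than a minimal-gap pair, but the contradiction is identical). The only detail to patch is the degenerate case $k_s=0$, where $|Z_s|=k_s-1$ and $\#_k(Z^{-1}\tau_s)=\#_{k_s}(Z_s^{-1}\tau_s)$ both fail as written; the paper checks separately that the exponent contribution is still $0$ there.
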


\begin{proof}
	For the first claim, we start with the formula of Theorem \ref{th-integration-epsilon}:
	$$E(\tr (U_1A_1U_1^*\ldots U_kA_kU_k^*) )=
	\sum_{\sigma,\tau\in \tilde S_k}\tWg(\sigma^{-1}\tau, N) \tr_{\sigma}(A_1, \ldots , A_k)\prod_{s \in S} N^{\#(\tau_s^{-1}Z))+\#  (\sigma_s )-1}.$$
	Given that 
	$$\Wg (\sigma, N) = \mu (\sigma ) N^{-k-|\sigma|} (1+O(N^{-2})),$$
	we learn
	\begin{align*}
		\tWg(\sigma, N)
		&= \prod_{\iota\in\I} \mu(\sigma|_{B_\iota})N^{(\#\K_\iota)(-\#B_\iota-\abs{\sigma|_{B_\iota}})}(1+O(N^{-2\#\K_\iota})) \\
		&= \mu(\sigma) \paren{\prod_{\iota\in\I}\prod_{s \in \K_\iota} N^{-\#B_\iota - \abs{\sigma|_{B_\iota}}}}(1+O(N^{-2})) \\
		&= \mu(\sigma) \paren{\prod_{s \in S} N^{-k_s - \abs{\sigma_s}}}(1+O(N^{-2})).\\
	\end{align*}
	Assembling the above, we have
	\[
		E(\tr (U_1A_1U_1^*\ldots U_kA_kU_k^*) )
		= \sum_{\sigma, \tau \in \tilde S_k} \tr_\sigma(A_1, \ldots, A_k) \mu(\sigma^{-1}\tau)\paren{\prod_{s\in S} N^{\#(\tau_s^{-1}Z)+\#(\sigma_s) - 1 - k_s - \abs{\tau_s^{-1}\sigma_s}}}(1+O(N^{-2}));
	\]
	let us turn our attention to the exponent on $N$ in each term of the product.
	We note the following things:
	first, that $\#(\sigma_s) - k_s = -\abs{\sigma_s}$;
	second, that if $k_s = 0$ then $\#(\tau_s^{-1}Z)-1 = 0 = \abs{Z_s} - \abs{\tau_s^{-1}Z_s}$;
	and third, that if $k_s > 0$ then $\#(\tau_s^{-1}Z)-1 = \#(\tau_s^{-1}Z_s) - 1 = k_s-\abs{\tau_s^{-1}Z_s} - (k_s-\abs{Z_s}) = \abs{Z_s} - \abs{\tau_s^{-1}Z_s}$.
	Putting this together, we arrive:
	\[
		E(\tr (U_1A_1U_1^*\ldots U_kA_kU_k^*) )
		= \sum_{\sigma, \tau \in \tilde S_k} \tr_\sigma(A_1, \ldots, A_k) \mu(\sigma^{-1}\tau)\paren{\prod_{s\in S} N^{\abs{Z_s}-\abs{\tau_s^{-1}Z_s}-\abs{\sigma_s^{-1}\tau_s}-\abs{\sigma_s}}}(1+O(N^{-2})).
	\]

	As for the second claim, on the one hand, $\tr_{\sigma}(A_1, \ldots , A_k)$ is always bounded by assumption, which implies that
	the quantities $\tr_{\sigma}(A_1, \ldots , A_k)$ are uniformly bounded in $\sigma , N$. 
	On the other hand, $|Z_s|-|\sigma_s|-|\sigma_s^{-1}\tau_s|-|\tau_s^{-1}Z_s|\le 0$.
	Indeed, this can be reformulated as
	$$|Z_s|\le |\sigma_s|+|\sigma_s^{-1}\tau_s|+|\tau_s^{-1}Z_s|,$$
	which is obvious, since $|\sigma\tau|\le |\sigma |+|\tau |$.
	In addition, it is known that in case of equality, $\sigma_s,\tau_s$ form non-crossing partitions. 
	This is a classical result in combinatorics, we refer to \cite{MR1644993} for one of the first
	uses of this fact in random matrix theory. 

	So, the summands of Theorem \ref{th-integration-epsilon} that are crossing 
	are of order $o(1)$, and we may restrict our sum to non-crossing contributions up to $o(1)$.
	However, according to Proposition \ref{prop-fixed-point}, each such summand involves a $\tr A_i$ so is actually also $o(1)$,
	which proves the vanishing as $N\to\infty$ of the expression $E(\tr (U_1A_1U_1^*\ldots U_kA_kU_k^*))$.
\end{proof}

Note that for the purpose of this proof, we only need to know that $\mu (\sigma_s^{-1}\tau_s)$ is a function, and its
value is irrelevant. It turns out to be an integer, the Biane-Speicher M\"obius function. 
Let us also remark that Equation \eqref{eq-limit-eps} of Theorem \ref{th-asymptotic-eps} is the $\varepsilon$-free
moment cumulant formula of Speicher--Wysocza\'nski \cite{speicher2016mixtures}.

\subsection{A fixed point.}
\begin{prop}\label{prop-fixed-point}
	Suppose that $(\iota_1, \ldots, \iota_k) \in I_k^\varepsilon$.
	Suppose further that $\sigma \in \tilde S_k$ is such that $\sigma_s$ describes a non-crossing partition for each $s \in S$.
	Then $\sigma$ has at least one fixed point.
\end{prop}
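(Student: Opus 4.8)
The plan is to argue by contradiction: assume $\sigma$ has no fixed point and produce a pair of positions carrying the same label with no obstruction between them, contradicting membership in $I_k^\varepsilon$. Since $\sigma \in \tilde S_k$ stabilizes the word, each cycle of $\sigma$ is label-homogeneous, and such a cycle appears as a block of the non-crossing partition induced by $\sigma_s$ on $J_s$ for every $s \in \K_\iota$, where $\iota$ is its common label. The assumption that $\sigma$ is fixed-point-free says precisely that every such block has at least two elements. Call a position $m$ strictly between $a < b$ a \emph{blocker} for the label $\iota_a$ if $\iota_m \neq \iota_a$ and $\K_{\iota_m} \cap \K_{\iota_a} \neq \emptyset$; membership of the word in $I_k^\varepsilon$ guarantees that for any pair $a<b$ with $\iota_a = \iota_b$ there is a blocker between them. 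I would aim to exhibit such a pair with \emph{no} blocker between them.

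The extremal object I would use is a minimal ``chord''. For a cycle $C$ of $\sigma$ and $a \in C$, the pair $a \to \sigma(a)$ records two cyclically adjacent elements of a block; I restrict to \emph{non-wrapping} chords, those with $a < \sigma(a)$, and note each cycle of length at least two has one (take $a = \min C$, using that $\sigma_s$ orients its blocks along $Z_s$). Among all non-wrapping chords I would choose one, say $a \to b = \sigma(a)$ living on a string $s$, whose open interval $F := \set{a+1, \dots, b-1}$ is as small as possible. The key structural input is the standard nesting property of non-crossing permutations: since $b = \sigma_s(a)$ is the immediate successor of $a$ in its $\sigma_s$-block, the set $J_s \cap F$ is a disjoint union of complete $\sigma_s$-blocks, each contained in $F$.

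With this in hand I would show $F$ contains no blocker, in two steps, each invoking minimality. First, if $J_s \cap F$ were nonempty it would contain a complete block $B \subseteq F$ of size at least two; its first two elements $c < \sigma(c)$ form a strictly shorter non-wrapping chord inside $F$, contradicting minimality, so $J_s \cap F = \emptyset$. Second, fix any $s' \in \K_{\iota_a}$ (so $a, b \in J_{s'}$) and suppose some $p \in J_{s'} \cap F$. Then $p$ cannot lie in the cycle of $a$, since $b = \sigma(a)$ is the immediate successor of $a$ there; hence $p$ lies in a different block of the non-crossing partition $\pi_{s'}$, and because $a, b$ are adjacent in their own block, non-crossingness forces the entire cycle of $p$ into $F$, whose first two elements again yield a strictly shorter non-wrapping chord. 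Thus $J_{s'} \cap F = \emptyset$ for every $s' \in \K_{\iota_a}$, which means every $m \in F$ satisfies $\K_{\iota_m} \cap \K_{\iota_a} = \emptyset$, i.e. $\varepsilon_{\iota_a \iota_m} = 1$. Since $\iota_a = \iota_b$ with $a < b$, this contradicts the blocker demanded by $I_k^\varepsilon$, completing the argument.

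The main obstacle, and the reason for the care above, is that chords coming from different strings may interleave, so a single string's non-crossing structure does not by itself exclude an obstruction sitting on another string; this is exactly what the combination of global minimality with the per-string nesting property is designed to handle. A secondary but genuine subtlety is the cyclic-versus-linear mismatch: the shortest chord could a priori wrap around the junction between positions $k$ and $1$, where the cyclic non-crossing data lives but the linear condition defining $I_k^\varepsilon$ does not apply. Restricting the extremal choice to non-wrapping chords from the outset is what lets the linear reducedness condition deliver the final contradiction.
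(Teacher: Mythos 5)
Your overall strategy is the same as the paper's: make an extremal choice, use the blocker supplied by $(\iota_1,\dots,\iota_k)\in I_k^\varepsilon$ to find a foreign block strictly inside the extremal interval, and let non-crossingness force that block to nest, contradicting minimality. (The paper minimizes $\max B-\min B$ over blocks of $\sigma$ rather than over chords $a<\sigma(a)$, which sidesteps the wrap-around issue you handle by hand, but the two are essentially the same argument.) There is, however, one step that does not follow from the stated hypothesis: you assert that $b=\sigma(a)$ is the \emph{immediate successor of $a$ in its block with respect to the natural order}, and you use this twice --- to claim that $J_s\cap F$ splits into complete blocks, and to claim that no $p$ in the cycle of $a$ can lie in $F$. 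The hypothesis is only that the \emph{orbit partition} of $\sigma_s$ is non-crossing; it says nothing about how $\sigma_s$ orients its cycles. For instance $\sigma=(1\ 3\ 2)$ has a non-crossing orbit partition (a single block), yet $\sigma(1)=3$ is not the order-successor of $1$ in that block, and $2$, which lies in the cycle of $1$, sits inside the interval $(1,3)$; so a minimal non-wrapping chord can contain elements of its own cycle in its interior. The ``orientation along $Z_s$'' you invoke is a property of non-crossing \emph{permutations} in the geodesic sense $\abs{\sigma_s}+\abs{\sigma_s^{-1}Z_s}=\abs{Z_s}$; it does hold for the $\sigma$ that actually arise in the proof of Theorem~\ref{th-asymptotic-eps}, but it is not part of the proposition's hypothesis, so your intermediate conclusion $J_{s'}\cap F=\emptyset$ is not established as written.

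Fortunately the flaw is inessential, because you are proving more than you need. The blocker $\ell$ demanded by $I_k^\varepsilon$ for the pair $a<b$ satisfies $\iota_\ell\neq\iota_a$, so its cycle is automatically distinct from that of $a$ (cycles of $\sigma\in\tilde S_k$ are label-homogeneous); and for two \emph{distinct} blocks $B_0\ni a,b$ and $B_1\ni\ell$ with $a<\ell<b$, non-crossingness of the partition on $J_{s'}$ (for any $s'\in\K_{\iota_a}\cap\K_{\iota_\ell}$) forces $B_1\subseteq\set{a+1,\dots,b-1}$ with no adjacency assumption whatsoever. Then $\min B_1\to\sigma(\min B_1)$ is a strictly shorter non-wrapping chord, and you are done. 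So the repair is simply to drop the claim that $J_{s'}\cap F$ is empty and show only that $F$ contains no position of a \emph{different} label sharing a string with $a$ --- that is all the $I_k^\varepsilon$ condition requires you to contradict.
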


\begin{proof}
	Let $B_0 \in \pi_\sigma$ be a block such that $\max B_0 - \min B_0$ is minimal: that is, it is a block of minimal length.
	If $B_0$ is a singleton, it is a fixed point of $\sigma$ and we are done; therefore let us suppose that $B_0$ contains distinct elements $i < j$.
	As $(\iota_1, \ldots, \iota_k) \in I_k^\varepsilon$ and $\iota_i = \iota_j$, there must be some $\ell$ with $i < \ell < j$, $\iota_\ell \neq \iota_i$ and $\varepsilon_{\iota_i, \iota_\ell} = 0$; moreover, there is some $s \in \K_{\iota_i}\cap \K_{\iota_\ell}$.
	Let $B_1\in\pi_\sigma$ be the block containing $\ell$; as $\iota_i \neq \iota_\ell$, $B_0 \neq B_1$.
	Now since $\sigma_s$ describes a non-crossing partition of which both $B_0$ and $B_1$ are blocks, we must have $B_1 \subset \set{i+1, \ldots, j-1}$, contradicting the minimality of the length of $B_0$.
\end{proof}

\section{Further considerations.}
\label{sec:extensions}

\subsection{The orthogonal case.}
The preprint \cite{1808.08674} of Morampudi and Laumann (which we were not aware of when preparing this manuscript) contains a result quite similar to Theorem~\ref{thm:main}, with motivations arising from 
quantum many-body systems, and a graphical approach to their arguments.
The preprint explicitly raises the question of the behaviour of such models with different symmetry groups; we will show that our techniques may be readily adapted to also describe the asymptotic distribution of orthogonally-invariant matrix ensembles.

Let us adopt the setting from Section~\ref{sec:model}, and let $(O_\iota)_{\iota\in\I}$ be a family of independent orthogonal matrices with $O_\iota$ Haar-distributed in $\cO(M_\iota^{(N)})$.
\begin{thm}
	\label{thm:main}
	Let $(a_{\iota,j}^{(N)})_N \in O_\iota\M_\iota^{(N)}O_\iota^*$ be a collection of sequences of random matrices, indexed by $(\iota, j) \in \I\times J$, such that for each $\iota$, the collection $\paren{(a_{\iota,j}^{(N)})_N}_j$ has a joint limiting $*$-distribution in the large $N$ limit as per definition $\ref{def-3}$.
	Then the entire family $\paren{(a_{\iota,j}^{(N)})_N}_{\iota,j\in\I\times J}$ has a joint limiting $*$-distribution in the large $N$ limit, and the original families are $\varepsilon$-free where
	\[\varepsilon_{i,j} = \begin{cases}
			1 & \text{ if } \K_i\cap\K_j = \emptyset \\
			0 & \text{ otherwise}
	\end{cases}.\]
\end{thm}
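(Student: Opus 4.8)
The plan is to reprise the arguments of Theorems~\ref{th-integration-epsilon} and~\ref{th-asymptotic-eps} with the same architecture, replacing the unitary Weingarten calculus throughout by its orthogonal counterpart. Recall (see \cite{MR2217291}) that the orthogonal Weingarten calculus integrates a product of $2p$ entries of a Haar $O\in\cO(N)$ not over pairs of permutations but over pairs of pair partitions (perfect matchings) $\mathfrak{m},\mathfrak{n}$ of $[2p]$:
\[
\int_{\cO(N)} o_{i_1j_1}\cdots o_{i_{2p}j_{2p}}\,dO = \sum_{\mathfrak{m},\mathfrak{n}} \Wg^{\cO}(\mathfrak{m},\mathfrak{n},N)\prod_{\set{a,b}\in\mathfrak{m}}\delta_{i_ai_b}\prod_{\set{c,d}\in\mathfrak{n}}\delta_{j_cj_d},
\]
the integral of an odd number of entries vanishing. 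The asymptotic input replacing $\Wg(\sigma,N)=\mu(\sigma)N^{-k-|\sigma|}(1+O(N^{-2}))$ is that $\Wg^{\cO}(\mathfrak{m},\mathfrak{n},N)=O(N^{-(2p-\ell(\mathfrak{m},\mathfrak{n}))})$ with nonzero leading coefficient, where $\ell(\mathfrak{m},\mathfrak{n})$ counts the loops of the graph on $[2p]$ whose edge sets are $\mathfrak{m}$ and $\mathfrak{n}$. The reduction of mixed $*$-moments of the $a_{\iota,j}^{(N)}\in O_\iota\M_\iota^{(N)}O_\iota^*$ to alternating products $E(\tr(O_1A_1O_1^*\cdots O_kA_kO_k^*))$ is identical to the unitary setting, noting $O_\iota^*=O_\iota^T$.

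First I would establish the orthogonal analogue of Theorem~\ref{th-integration-epsilon}. Expanding over all matrix indices exactly as before, each factor $O_{\iota_j}A_jO_{\iota_j}^T$ now contributes \emph{two} entries of $O_{\iota_j}$, so family $\iota$ supplies $2\#B_\iota$ legs integrated by a pair of matchings $(\mathfrak{m}_\iota,\mathfrak{n}_\iota)$. Independence of the families lets me assemble these into global matchings respecting the block/string structure; entries off the strings $\K_\iota$ force row and column indices to agree, and summing the remaining free indices string by string reads off explicit powers of $N$ from loop counts. This yields an exact formula expressing the moment as a sum over matchings of an $A$-dependent contraction (now a generalized trace of products of the $A_j$ and their transposes $A_j^T$, dictated by the matchings) times a modified weight $\widetilde{\Wg}^{\cO}$ — a product over $\iota$ of $\Wg^{\cO}(\cdot,\cdot,N^{\#\K_\iota})$ — times a monomial in $N$. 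Since this again expresses every mixed $*$-moment through the convergent individual data of Definition~\ref{def-3} and explicit factors converging as $N\to\infty$, the first assertion (existence of the joint limiting $*$-distribution) follows at once.

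Next I would run the asymptotic and vanishing step. Feeding the magnitude estimate for $\Wg^{\cO}$ into the exact formula and collecting powers of $N$ on each string, the three elementary identities used in the proof of Theorem~\ref{th-asymptotic-eps} should again reduce the total per-string exponent to a non-positive quantity, with equality exactly when the per-string matchings are \emph{non-crossing}. The $A$-contractions being uniformly bounded by hypothesis, crossing terms contribute $o(1)$ and the sum collapses to non-crossing configurations up to $o(1)$. It then remains to prove the orthogonal analogue of Proposition~\ref{prop-fixed-point}: when $(\iota_1,\ldots,\iota_k)\in I_k^\varepsilon$, every surviving non-crossing configuration isolates a single letter $A_i$ and so carries a factor $\tr(A_i)=o(1)$, forcing the whole expression to vanish.

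The hard part will be precisely this last combinatorial lemma, which is genuinely more delicate than in the unitary case. Because the entries are real there is no creation/annihilation bookkeeping, so a matching may cap the two legs of a single factor $O_{\iota_j}A_jO_{\iota_j}^T$ together — conveniently producing $\tr(A_j)$ outright — but it may equally link legs of distinct factors ``in reverse'', yielding transposed blocks $A_i^T$ and a strictly richer family of non-crossing patterns than the cyclic ones produced by $\tr_\sigma$. I would therefore recast Proposition~\ref{prop-fixed-point} at the level of non-crossing pair partitions of the doubled index set, again taking a block of minimal length and exploiting that $(\iota_1,\ldots,\iota_k)\in I_k^\varepsilon$ furnishes, between any two equal neighbouring labels, an intervening index $\ell$ with $\varepsilon_{\iota_i,\iota_\ell}=0$ and a shared string $s\in\K_{\iota_i}\cap\K_{\iota_\ell}$ on which non-crossingness is violated unless some single $A_i$ is cut out. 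The real work is checking that the extra ``reversing'' pairings available in the orthogonal case cannot be used to evade this minimal-block contradiction; once that is secured, the proof closes exactly as for Theorem~\ref{th-asymptotic-eps}(\ref{th-asymptotic-eps-vanishing}).
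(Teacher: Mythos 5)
Your overall architecture (redo the Weingarten expansion with pair partitions, isolate per-string powers of $N$, restrict to the leading-order configurations, then kill each surviving term with a factor $\tr(A_i)=o(1)$) is the right shape, but the proposal has a genuine gap exactly where you flag ``the real work'': you never actually deal with the parity-violating (``reversing'') pairings that produce transposed blocks $A_i^T$. You defer this to an unproven orthogonal analogue of Proposition~\ref{prop-fixed-point} formulated over a ``strictly richer family of non-crossing patterns,'' and you give no argument that a minimal-block contradiction still works there; as written, the proof does not close. Moreover, your intermediate claim that the per-string exponent is maximal ``exactly when the per-string matchings are non-crossing'' is not the right dichotomy: the correct statement is that among all pairings, only those which never match two legs of the same parity (i.e.\ the permutation-type pairings) can achieve the leading order in the trace structure at hand, and among those the maximizers are the non-crossing permutations, exactly as in the unitary case.

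This is precisely how the paper resolves the difficulty, and it makes your hard lemma unnecessary. Using the asymptotics $\Wg_\cO(\sigma,\tau,N)=\mu(\sigma,\tau)N^{-2k+\frac12\#(\sigma\tau)}(1+O(N^{-1}))$ together with \cite[Lemma 5.1]{MR2217291} (equivalently, the fact that conjugation by independent Haar orthogonal matrices yields the same asymptotic traces as conjugation by independent Haar unitaries, up to $O(N^{-1})$), one sees string by string that the reversing pairings are already subleading in $N$ and contribute $o(1)$ regardless of any combinatorial structure of the word $(\iota_1,\ldots,\iota_k)$. The surviving configurations are then in bijection with pairs of permutations in $\tilde S_k$, the exact formula of Theorem~\ref{th-integration-epsilon} and the exponent bookkeeping of Theorem~\ref{th-asymptotic-eps} carry over verbatim, and the vanishing step uses the \emph{same} Proposition~\ref{prop-fixed-point} already proven for the unitary case --- no new fixed-point lemma over pair partitions is needed. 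To repair your write-up you should replace the deferred combinatorial lemma by this suppression-of-reversing-pairings argument (or prove directly that a same-parity match costs at least one power of $N$ in the loop count $\frac12\#(\sigma\tau)$ relative to the trace constraints).
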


The thrust of the argument will be to show that the combinatorics are in correspondence to the unitary setting.
We replace the Weingarten function with its orthogonal analogue, but show that the differences caused by this vanish asymptotically and we are left in a situation where the computation may proceed as in the unitary case.

We recall now some useful results from \cite{MR2217291} about the orthogonal Weingarten calculus.
Let $P_{2k}$ be the set of pairings on the set $\set{1, \ldots, 2k}$, and note that each such pairing induces a permutation of order two; then for $N \geq k$, the orthogonal Weingarten function $\Wg_\cO(\cdot, \cdot, N) : P_{2k}\times P_{2k} \to \C$ satisfies
\[\int u_{i_1j_1}\cdots u_{i_{2k}j_{2k}} = \sum_{\sigma, \tau \in P_{2k}} \delta_{j,\sigma\cdot j}\delta_{i, \tau\cdot i} \Wg_\cO(\sigma, \tau, N),\]
where the $u$'s are the entries of a Haar-distributed orthogonal matrix in $\cO_N$.
The asymptotic behaviour of $\Wg_\cO$ is given by
\[\Wg_\cO(\sigma, \tau, N) = \mu(\sigma, \tau) N^{-2k + \frac12\#(\sigma\tau)}(1 + O(N^{-1})),\]
where $\mu : P_{2k}^2 \to \Z_*$ is some function.

We now point out the following: the delta functions arising for any fixed $s \in S$ are independent of all the other strings in $S$.
The delta functions arising corresponding to those indices $2\ell+1, 2\ell+2$ for which $s \in K_{\iota\ell}$ correspond precisely to the delta functions one encounters when computing the expected trace of a corresponding product of matrices in a single matrix family $M_N(\C)$, conjugated by matrices in $\cO(N)$; as in the unitary case, we also have restrictions that the choices of these indices constrain the choices of all indices corresponding to $2\ell+1, 2\ell+2$ for $s \notin K_{\iota_\ell}$.
But conjugation by independent Haar-distributed orthogonal or independent Haar-distributed unitary matrices both lead to asymptotic freeness (see \cite{MR2217291}), i.e.,
\[E(\tr(Q_1B_1Q_1^*\cdots Q_{k_s}B_{k_s}Q_{k_s}^*))
= E(\tr(V_1B_1V_1^*\cdots V_{k_s}B_{k_s}V_{k_s}^*)) + O(N^{-1})\]
whenever the $Q$'s are drawn from a pool of independent Haar-distributed matrices in $\O(N)$, the $V$'s are drawn similarly from $U(N)$ (with the same independence or equality as the choice of $Q$'s), and the $B$'s are arbitrary.
It follows from a linear independence argument that the only pairings which contribute asymptotically to the sum arising from expanding the left hand side are those of the same type as in the unitary case, i.e., those which never match two elements of the same parity.
(A more direct argument may be found in \cite[Lemma 5.1]{MR2217291}.)

It follows that exactly the same computation carries through as in the unitary case, and we arrive at the same result.
We point out that although the $\frac12\#(\sigma\tau)$ in the exponent of the asymptotic expansion of the Weingarten function appears different from the unitary case, it merely accounts for the fact that in this picture every cycle is doubled, depending on whether one begins with an odd or even index.

\subsection{Applications of the main results}

Let us quote two corollaries of our main results.
We start with a corollary of operator algebraic flavor: 

\begin{cor}
Let $K$ be any loop-free and multiplicity free unoriented graph on $k$ vertices, and  $(M_1,\tau_1),\ldots (M_k,\tau_k)$ 
be von Neumann tracial non-commutative probability spaces, i.e. $M_i$ is a finite von Neumann algebra, 
$\tau_i$ is normal faithful tracial. 
If all $(M_i,\tau_i)$ satisfy the Connes embedding property, then so does their von Neumann $\varepsilon$-product
\end{cor}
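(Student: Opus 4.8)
The plan is to build matricial microstates for the von Neumann $\varepsilon$-product directly out of the random matrix model of Section~\ref{sec:model}, taking $\varepsilon$ to be the adjacency matrix of $K$ (so that adjacent vertices give commuting, and non-adjacent vertices freely independent, copies). Recall that a tracial von Neumann algebra has the Connes embedding property exactly when its self-adjoint generators admit matricial microstates: for any finite collection of generators, any $m$, and any $\delta>0$ there are matrices in some $M_d(\C)$ whose $*$-moments up to order $m$ match those of the generators to within $\delta$. Hence it suffices to exhibit, for the canonical generators of the $\varepsilon$-product, matrices whose $*$-moments converge to the $\varepsilon$-free joint moments of the $(M_i,\tau_i)$. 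I would first reduce to the case where each $M_i$ is finitely generated: the $\varepsilon$-product is the trace-norm closure of the union of the $\varepsilon$-products of finitely generated trace-preserving subalgebras, and the Connes embedding property passes to such inductive limits since approximate microstates for finitely many elements only see a finitely generated piece.

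Fixing finite self-adjoint generating sets, the Connes embedding property of each $(M_i,\tau_i)$ provides, for each $N$, matrices $(b_{i,j}^{(N)})_j$ of uniformly bounded operator norm lying in the matrix algebra $\M_i^{(N)}\cong M_{N^{\#\K_i}}(\C)$ whose normalized-trace $*$-moments converge as $N\to\infty$ to those of the generators of $M_i$. (Since the Connes embedding property supplies microstates in arbitrarily large dimensions, and amplification lets one adjust dimensions, one may arrange the matrix size to be exactly $N^{\#\K_i}$ with a vanishing error absorbed into the limit.) I then conjugate by the Haar unitaries, setting $a_{i,j}^{(N)} := U_i\, b_{i,j}^{(N)}\, U_i^*\in U_i\M_i^{(N)}U_i^*$, so as to land in the hypotheses of Theorem~\ref{thm:main} via one of the models (e.g.\ Model~\ref{model:anticliques}) realizing this $\varepsilon$. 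Because conjugation by a unitary is a trace-preserving $*$-isomorphism, each family $(a_{i,j}^{(N)})_j$ has the same limiting $*$-distribution as $(b_{i,j}^{(N)})_j$, namely that of the generators of $M_i$.

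Applying Theorem~\ref{thm:main} to the family $(a_{i,j}^{(N)})_{i,j}$, I obtain that the whole family converges in $*$-moments and that the limiting marginals are $\varepsilon$-free. Since $\varepsilon$-freeness together with the marginals determines every joint $*$-moment, the limiting joint $*$-distribution is precisely that of the canonical generators inside the von Neumann $\varepsilon$-product. All the matrices $a_{i,j}^{(N)}$ sit in the single matrix algebra $\M^{(N)}\cong M_{N^{\#S}}(\C)$, which embeds trace-preservingly into $\mathcal{R}^\omega$; selecting a generic realization of the Haar unitaries (the relevant normalized-trace moments concentrate, so almost every realization works) converts the convergence of expected moments into deterministic microstates. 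This exhibits matricial microstates for the generators of the $\varepsilon$-product, so the $\varepsilon$-product has the Connes embedding property.

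The main obstacle is the final identification step rather than any computation. One must verify that the $\varepsilon$-free limit produced by the model genuinely coincides with the canonical tracial von Neumann $\varepsilon$-product, and that matching of $*$-moments suffices to conclude $\mathcal{R}^\omega$-embeddability. This rests on two facts to be invoked with care: that $\varepsilon$-independence uniquely determines the joint distribution from the marginals, so the model cannot converge to the "wrong" algebra, and that the $\varepsilon$-product carries a faithful trace, so that the microstates characterization of the Connes embedding property genuinely applies to it. The passage from convergence of expected moments to deterministic microstates via concentration, and the reduction to the finitely generated case, are routine but should be spelled out.
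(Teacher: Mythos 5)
Your proposal is correct and follows essentially the same route as the paper: reduce to finitely generated $M_i$, use the Connes embedding property to obtain matrix models for the generators of each $M_i$ inside $\M_i^{(N)}$, conjugate by the independent Haar unitaries, and invoke Theorem~\ref{thm:main} to identify the limit with the $\varepsilon$-product. The paper's own proof is only a three-sentence sketch of this argument; the details you flag (dimension adjustment, concentration to pass from expected moments to deterministic microstates, and uniqueness of the $\varepsilon$-free joint distribution given the marginals) are exactly the ones it leaves implicit, and your treatment of them is sound.
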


Note here that we did not define the von Neumann $\varepsilon$-product, it is just the completion of the 
$\varepsilon$-product of $(M_i,\tau_i)$ in the GNS representation associated to the product trace. 

\begin{proof}
It is enough to assume that $M_i$ is finitely generated. The fact that $(M_i,\tau_i)$ satisfy the Connes embedding property
means that
$M_i$ embeds in $R^{\omega}$ in a trace preserving way, and that its generators admit a matrix model.
See for example \cite{MR2465797}.
We use our construction with this matrix model to conclude. 
\end{proof}

Let us finish with a corollary in geometric group theory of the above corollary.
We recall that a group is \emph{hyperlinear} if its group algebra satisfies the Connes embedding problem.
We refer to \cite{MR3408561,MR2460675} for more details on the notions of hyperlinearity (and the Connes problem).
A notion of graph product of group was introduced, cf in particular \cite{MR910401,MR891135,MR880971} for 
early works on this topic. 
We can state the following

\begin{cor}
If $G_1, \ldots , G_k$ are hyperlinear groups and $K$ be any loop-free and multiplicity free unoriented graph on $k$ vertices
then the graph product of these groups over $K$ is also hyperlinear. 
\end{cor}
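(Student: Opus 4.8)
The plan is to reduce this statement to the preceding corollary on the von Neumann $\varepsilon$-product, together with the standard identification of the group von Neumann algebra of a graph product of groups with the $\varepsilon$-product of the individual group von Neumann algebras.

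First I would let $\varepsilon$ be the adjacency matrix of $K$. Since $K$ is loop-free, $\varepsilon$ has zero diagonal; since it is multiplicity-free and unoriented, $\varepsilon$ is a symmetric $0$-$1$ matrix --- exactly the data required in Section~\ref{sec:prelim}. For each vertex $i$ let $(L(G_i), \tau_i)$ be the group von Neumann algebra equipped with its canonical trace. Hyperlinearity of $G_i$ means precisely that $(L(G_i), \tau_i)$ satisfies the Connes embedding property, i.e. it embeds in $R^\omega$ in a trace-preserving way.

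The key step is to identify the group von Neumann algebra $L(\Gamma)$ of the graph product $\Gamma$ of the $G_i$ over $K$ with the von Neumann $\varepsilon$-product of the $(L(G_i), \tau_i)$. At the algebraic level, the subalgebras $\C[G_i] \subseteq \C[\Gamma]$ commute exactly when the corresponding vertices are adjacent in $K$, that is when $\varepsilon_{ij} = 1$, by the very definition of the graph product. To see that they are moreover $\varepsilon$-independent with respect to the canonical trace, one checks that the trace of a product $g_1 \cdots g_k$ with each $g_j \in G_{i_j}$ centered (so $g_j \neq e$) and $(i_1, \ldots, i_k) \in I^\varepsilon_k$ vanishes; this is precisely the assertion that such a word is reduced and nontrivial in $\Gamma$, which is the content of the normal form theorem for graph products of groups. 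Passing to the GNS completion with respect to the product trace then identifies $L(\Gamma)$ with the von Neumann $\varepsilon$-product of the $(L(G_i), \tau_i)$.

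With this identification in hand the conclusion is immediate: each $(L(G_i), \tau_i)$ satisfies the Connes embedding property by hyperlinearity, so the preceding corollary shows that their von Neumann $\varepsilon$-product --- namely $L(\Gamma)$ --- does as well, which is to say that $\Gamma$ is hyperlinear. The only non-routine ingredient is the identification in the previous paragraph, and within it the main point is the vanishing of the canonical trace on reduced words. This, however, is classical graph-product combinatorics (the normal form, or reduced word, theorem), so no genuinely new work is needed beyond invoking it.
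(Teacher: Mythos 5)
Your proposal is correct and follows the same route as the paper, which likewise deduces the result from the preceding corollary via the identification of the group algebra of the graph product with the $\varepsilon$-product of the group $*$-algebras of the $G_i$. You simply spell out the normal-form argument behind that identification in more detail than the paper does.
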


This is just a consequence on the fact that the group algebra of the graph product is the $\varepsilon$-product of the group 
$*$-algebras of $G_i$, and an application of the above lemma.

\bibliography{thebibliography}

\end{document}